\begin{document}

\title*{Ergodicity properties of $p$ -adic $(2,1)$-rational dynamical systems with unique fixed point}
\titlerunning{Ergodicity properties of $p$ -adic $(2,1)$-rational dynamical systems}
\author{Iskandar A. Sattarov}
\institute{I.A. Sattarov \at Institute of mathematics,
81, Mirzo Ulug'bek str., 100170, Tashkent, Uzbekistan. \\ \email{sattarovi-a@yandex.ru}}
%
%
\maketitle

\abstract*{We consider a family of $(2,1)$-rational functions given on the
set of $p$-adic field $Q_p$. Each such function has a unique
fixed point. We study ergodicity properties of the dynamical
systems generated by $(2,1)$-rational functions. For each such
function we describe all possible invariant spheres. We
characterize ergodicity of each $p$-adic dynamical system with
respect to Haar measure reduced on each invariant sphere. In
particular, we found an invariant spheres on which the dynamical
system is ergodic and on all other invariant spheres the dynamical
systems are not ergodic.}

\abstract{We consider a family of $(2,1)$-rational functions given on the
set of $p$-adic field $Q_p$. Each such function has a unique
fixed point. We study ergodicity properties of the dynamical
systems generated by $(2,1)$-rational functions. For each such
function we describe all possible invariant spheres. We
characterize ergodicity of each $p$-adic dynamical system with
respect to Haar measure reduced on each invariant sphere. In
particular, we found an invariant spheres on which the dynamical
system is ergodic and on all other invariant spheres the dynamical
systems are not ergodic.}

\section{Introduction}

In this paper we will state some results concerning discrete
dynamical systems defined over the $p$-adic field $Q_p$.
In \cite{AKTS}  the behavior  of a $p$-adic dynamical
system $f(x)=x^n$ in the fields of $p$-adic numbers $Q_p$ and
complex $p$-adic numbers $C_p$ was investigated. Some ergodic
properties of that dynamical system have been considered in
\cite{GKL}.

In \cite{MR} the behavior of the trajectory of a rational $p$-adic
dynamical system in complex $p$-adic filed $C_p$ is studied. It is proved that such
kind of dynamical system is not ergodic on a unit sphere with
respect to the Haar measure.

In \cite{ARS}, \cite{RS}, \cite{RS2} and \cite{S}  the trajectories of some rational
$p$-adic dynamical systems in a complex $p$-adic field $C_p$ are studied.

In this paper for a class of $(2,1)$-rational functions we study
ergodicity properties of the dynamical systems on the sphere of
$p$-adic numbers $Q_p$. For each such function we describe all
possible invariant spheres. We characterize ergodicity of each
$2$-adic dynamical system with respect to Haar measure reduced on
each invariant sphere. In particular, we found an invariant
spheres on which the dynamical system is ergodic and on all other
invariant spheres the dynamical systems are not ergodic.

\subsection{$p$-adic numbers}

Let $Q$ be the field of rational numbers. The greatest common
divisor of the positive integers $n$ and $m$ is denotes by
$(n,m)$. Every rational number $x\neq 0$ can be represented in the
form $x=p^{\gamma(x)}\frac{n}{m}$, where $\gamma(x),n\in Z$, $m$ is a
positive integer, $(p,n)=1$, $(p,m)=1$ and $p$ is a fixed prime
number.

The $p$-adic norm of $x$ is given by
$$
|x|_p=\left\{
\begin{array}{ll}
p^{-\gamma(x)}, & \ \textrm{ for $x\neq 0$},\\[2mm]
0, &\ \textrm{ for $x=0$}.\\
\end{array}
\right.
$$
It has the following properties:

1) $|x|_p\geq 0$ and $|x|_p=0$ if and only if $x=0$,

2) $|xy|_p=|x|_p|y|_p$,

3) the strong triangle inequality holds
$$
|x+y|_p\leq\max\{|x|_p,|y|_p\},
$$

3.1) if $|x|_p\neq |y|_p$ then $|x+y|_p=\max\{|x|_p,|y|_p\}$,

3.2) if $|x|_p=|y|_p$ then $|x+y|_p\leq |x|_p$.

Thus $|x|_p$ is a non-Archimedean norm.

The completion of $Q$ with  respect to the $p$-adic norm defines
the $p$-adic field which is denoted by $Q_p$.

For any $a\in Q_p$ and $r>0$ denote
$$
U_r(a)=\{x\in Q_p : |x-a|_p\leq r\},\ \ V_r(a)=\{x\in Q_p :
|x-a|_p< r\},
$$
$$
S_r(a)=\{x\in Q_p : |x-a|_p= r\}.
$$

A function $f:U_r(a)\to Q_p$ is said to be {\it analytic} if it
can be represented by
$$
f(x)=\sum_{n=0}^{\infty}f_n(x-a)^n, \ \ \ f_n\in Q_p,
$$ which converges uniformly on the ball $U_r(a)$.

\subsection{Dynamical systems in $Q_p$}

In this section we recall some known facts concerning dynamical
systems $(f,U)$ in $Q_p$, where $f: x\in U\to f(x)\in U$ is an
analytic function and $U=U_r(a)$ or $Q_p$.

Now let $f:U\to U$ be an analytic function. Denote $x_n=f^n(x_0)$,
where $x_0\in U$ and $f^n(x)=\underbrace{f\circ\dots\circ
f}_n(x)$.

Let us first recall some  the standard terminology of the theory
of dynamical systems (see for example \cite{PJS}). If $f(x_0)=x_0$
then $x_0$ is called a {\it fixed point}.
A fixed point $x_0$ is called an
{\it attractor} if there exists a neighborhood $V(x_0)$ of $x_0$
such that for all points $y\in V(x_0)$ it holds that
$\lim\limits_{n\to\infty}y_n=x_0$. If $x_0$ is an attractor then
its {\it basin of attraction} is
$$
A(x_0)=\{y\in Q_p :\ y_n\to x_0, \ n\to\infty\}.
$$

A fixed point $x_0$ is called {\it repeller} if there  exists a
neighborhood $V(x_0)$ of $x_0$ such that $|f(x)-x_0|_p>|x-x_0|_p$
for $x\in V(x_0)$, $x\neq x_0$.

A set $V$ is called
an invariant for $f$, if $f(V)\subset V$.

Let $x_0$ be a fixed point of a
function $f(x)$. The ball $V_r(x_0)$ (contained in $U$) is said to
be a {\it Siegel disk} if each sphere $S_{\rho}(x_0)$, $\rho<r$ is an
invariant sphere for $f(x)$. The union of all Siegel disks with the center at $x_0$ is
said to {\it a maximum Siegel disk} and is denoted by $SI(x_0)$.

Let $x_0$ be a fixed point of an analytic function  $f(x)$. Put
$$
\lambda=\frac{d}{dx}f(x_0).
$$

The point $x_0$ is {\it attractive} if $0\leq |\lambda|_p<1$, {\it indifferent} if
$|\lambda|_p=1$, and {\it repelling} if $|\lambda|_p>1$.

\section{Ergodicity of $(2,1)$-Rational $p$-adic dynamical systems}

A function is called an $(n,m)$-rational function if and only if
it can be written in the form $f(x)={P_n(x)\over T_m(x)}$, where
$P_n(x)$ and $T_m(x)$ are polynomial functions with degree $n$ and
$m$ respectively ($T_m(x)$ is a non zero polynomial).

In this paper we consider the ergodicity properties of the
dynamical system associated with the $(2,1)$-rational function
$f:Q_p\to Q_p$ defined by
\begin{equation}\label{3.1}
f(x)=\frac{x^2+ax+b}{x+c}, \ \ a,b,c\in Q_p,\ \  a\neq c, \  \
c^2-ac+b\ne 0
\end{equation}

where  $x\neq \hat x=-c$.

Note that $f(x)$ has the unique fixed point $x_0={b\over{c-a}}$.

For any $x\in Q_p$, $x\neq \hat x=-c$, by simple calculations we
get
\begin{equation}\label{ff}
    |f(x)-x_0|_p=|x-x_0|_p\ \cdot{|(x-x_0)+(x_0+a)|_p\over {|(x-x_0)+(x_0+c)|_p}}.
\end{equation}

Denote
$$
\mathcal P=\{x\in Q_p: \exists n\in N\cup\{0\}, f^n(x)=\hat x\},
$$
$$\alpha=|x_0+a|_p \ \ {\rm and} \ \ \beta=|x_0+c|_p.$$
Consider the following functions (see \cite{ARS}):

For $0\leq \alpha<\beta$ define the function $\varphi_{\alpha,\beta}: [0,+\infty)\to [0,+\infty)$ by
$$\varphi_{\alpha,\beta}(r)=\left\{\begin{array}{lllll}
{\alpha\over\beta}r, \ \ {\rm if} \ \ r<\alpha,\\[2mm]
\alpha^*, \ \ {\rm if} \ \ r=\alpha,\\[2mm]
{r^2\over\beta}, \ \ {\rm if} \ \ \alpha<r<\beta,\\[2mm]
\beta^*, \ \ {\rm if} \ \ r=\beta,\\[2mm]
r, \ \ \ \ {\rm if} \ \ r>\beta,
\end{array}
\right.
$$
where $\alpha^*$ and $\beta^*$ are some given numbers with $\alpha^*\leq{\alpha^2\over\beta}$, $\beta^*\geq\beta$.

For $0\leq \beta<\alpha$ define the function $\phi_{\alpha,\beta}: [0,+\infty)\to [0,+\infty)$ by
$$\phi_{\alpha,\beta}(r)=\left\{\begin{array}{lllll}
{\alpha\over\beta}r, \ \ {\rm if} \ \ r<\beta,\\[2mm]
\beta', \ \ {\rm if} \ \ r=\beta,\\[2mm]
\alpha, \ \ {\rm if} \ \ \beta<r<\alpha,\\[2mm]
\alpha', \ \ {\rm if} \ \ r=\alpha,\\[2mm]
r, \ \ \ \ {\rm if} \ \ r>\alpha,
\end{array}
\right.
$$
where $\alpha'$ and $\beta'$ some positive numbers with $\alpha'\leq \alpha$, $\beta'\geq\alpha$.

For $\alpha\geq 0$ we define the function $\psi_{\alpha}: [0,+\infty)\to [0,+\infty)$ by
$$\psi_{\alpha}(r)=\left\{\begin{array}{ll}
r, \ \ {\rm if} \ \ r\ne\alpha,\\[2mm]
\hat\alpha, \ \ {\rm if} \ \ r=\alpha,\\[2mm]
\end{array}
\right.
$$
where $\hat\alpha$ is a given number.

Using the formula (\ref{ff}) we easily get the following:

\begin{lemma}\label{l1} If $x\in S_r(x_0)$, then the following formula holds
$$|f^n(x)-x_0|_p=\left\{\begin{array}{lll}
\varphi_{\alpha,\beta}^n(r), \ \ \mbox{if} \ \ \alpha<\beta,\\[2mm]
\phi_{\alpha,\beta}^n(r), \ \ \mbox{if} \ \ \alpha>\beta,\\[2mm]
\psi_{\alpha}^n(r), \ \ \ \ \mbox{if} \ \ \alpha=\beta.
\end{array}\right.  \ \ n\geq 1.$$
\end{lemma}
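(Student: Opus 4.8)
The plan is to prove this by induction on $n$, with the base case $n=1$ being the heart of the matter. For $n=1$, the claim is that $|f(x)-x_0|_p = \varphi_{\alpha,\beta}(r)$ (resp.\ $\phi_{\alpha,\beta}(r)$, resp.\ $\psi_\alpha(r)$) whenever $|x-x_0|_p = r$. Everything flows from the key identity~(\ref{ff}),
$$|f(x)-x_0|_p = r\cdot \frac{|(x-x_0)+(x_0+a)|_p}{|(x-x_0)+(x_0+c)|_p},$$
together with the isosceles-triangle principle (properties 3.1 and 3.2): when $r \neq \alpha$ one has $|(x-x_0)+(x_0+a)|_p = \max\{r,\alpha\}$, and when $r\neq\beta$ one has $|(x-x_0)+(x_0+c)|_p = \max\{r,\beta\}$; in the borderline cases $r=\alpha$ or $r=\beta$ the numerator or denominator can only be bounded above by $r$, giving an unspecified value.

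First I would dispose of the case $\alpha<\beta$. Split on the position of $r$ relative to $\alpha$ and $\beta$: if $r<\alpha$, then numerator $=\alpha$, denominator $=\beta$, so the ratio is $\alpha/\beta$ and $|f(x)-x_0|_p = (\alpha/\beta)r$; if $\alpha<r<\beta$, numerator $=r$, denominator $=\beta$, giving $r^2/\beta$; if $r>\beta$, numerator $=$ denominator $=r$, giving $r$. For $r=\alpha$ the numerator is some value $\leq\alpha$, denominator $=\beta$, so $|f(x)-x_0|_p \leq \alpha^2/\beta$, which we record as $\alpha^*$; for $r=\beta$ the denominator is $\leq\beta$ while the numerator $=\beta$ (since $\alpha<\beta$), so $|f(x)-x_0|_p \geq \beta$, recorded as $\beta^*$. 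This matches $\varphi_{\alpha,\beta}$ exactly. The case $\beta<\alpha$ is entirely parallel: for $r<\beta$ one again gets $(\alpha/\beta)r$; for $\beta<r<\alpha$ numerator $=\alpha$, denominator $=r$, so the ratio is $\alpha/r$ and $|f(x)-x_0|_p = \alpha$; for $r>\alpha$ one gets $r$; the two borderline values $r=\beta$ and $r=\alpha$ give $|f(x)-x_0|_p\geq\alpha$ (call it $\beta'$) and $|f(x)-x_0|_p\leq\alpha$ (call it $\alpha'$) respectively, matching $\phi_{\alpha,\beta}$. For $\alpha=\beta$, when $r\neq\alpha$ both numerator and denominator equal $\max\{r,\alpha\}$, so $|f(x)-x_0|_p=r$; when $r=\alpha$ we get an unspecified value $\hat\alpha$, matching $\psi_\alpha$.

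For the inductive step, suppose the formula holds for $n$; writing $y=f^n(x)$ and noting $|y-x_0|_p = \varphi_{\alpha,\beta}^n(r)$ (say), we want $|f^{n+1}(x)-x_0|_p = |f(y)-x_0|_p = \varphi_{\alpha,\beta}(\varphi_{\alpha,\beta}^n(r)) = \varphi_{\alpha,\beta}^{n+1}(r)$. This is just the $n=1$ case applied to the point $y$, whose distance to $x_0$ is $\varphi_{\alpha,\beta}^n(r)$. One subtlety to mention: the $n=1$ computation is valid for \emph{any} point at distance $\rho$ from $x_0$ (as long as it is not a preimage of $\hat x=-c$ where $f$ is undefined), so the recursion is legitimate provided $f^n(x)\notin\mathcal P$; implicitly the statement is understood for $x\notin\mathcal P$, or one notes that if $f^k(x)=\hat x$ for some $k$ the trajectory terminates. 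The only real obstacle is bookkeeping of the borderline radii $r=\alpha$ and $r=\beta$ — making sure the one-sided bounds $\alpha^*\leq\alpha^2/\beta$, $\beta^*\geq\beta$, $\alpha'\leq\alpha$, $\beta'\geq\alpha$ are exactly what the triangle inequality yields and nothing sharper can be claimed without more information about the arguments of the $p$-adic numbers involved; beyond that the proof is a routine case analysis driven by~(\ref{ff}).
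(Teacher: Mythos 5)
Your proof is correct and follows exactly the route the paper intends: the paper omits the argument (saying the lemma follows "easily" from (\ref{ff})), and your case analysis via the isosceles-triangle property, with the borderline radii $r=\alpha,\beta$ absorbed into the unspecified values $\alpha^*,\beta^*,\alpha',\beta',\hat\alpha$, plus the induction on $n$, is precisely that routine verification. Your remark about trajectories hitting $\hat x$ (the set $\mathcal P$) is a sensible clarification consistent with the paper's standing assumption $x\ne\hat x$.
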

Thus the $p$-adic dynamical system $f^n(x), n\geq 1, x\in Q_p, x\ne \hat x$ is
related to the real dynamical systems generated by $\varphi_{\alpha,\beta}$, $\phi_{\alpha,\beta}$ and $\psi_\alpha$.

\begin{theorem}\label{t1}\cite{ARS}
The $p$-adic dynamical system generated by $f$ has the following properties:
\begin{itemize}
\item[1.] If $\alpha<\beta$, then $A(x_0)=V_{\beta}(x_0)$ and the spheres $S_r(x_0)$ are invariant with respect to $f$ for all $r>\beta$.
\item[2.] If $\alpha=\beta$, then $SI(x_0)=V_{\beta}(x_0)$ and the spheres $S_r(x_0)$ are invariant with respect to $f$ for all $r>\beta$.
\item[3.] If $\alpha>\beta$, then the inequality $|f(x)-x_0|_p>|x-x_0|_p$ satisfies
for $x\in V_{\alpha}(x_0)$, $x\neq x_0$ and the spheres $S_r(x_0)$ are invariant with respect to $f$ for all $r>\alpha$.
\item[4.] $f(S_r(x_0))\not\subset S_r(x_0)$ for any $r\in\{\alpha, \beta\}$.
\item[5.]
\begin{itemize}
\item[5.1.] If $\alpha\leq\beta$, then $\mathcal P\subset S_{\beta}(x_0)$.
\item[5.2.] If $\alpha>\beta$, then $\mathcal P\subset U_{\alpha}(x_0)$.
\end{itemize}
\end{itemize}
\end{theorem}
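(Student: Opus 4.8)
\medskip
\noindent\textbf{Proof sketch.}
The plan is to use Lemma~\ref{l1} to trade the $p$-adic orbit of a point $x$ for the real orbit of $r=|x-x_0|_p$ under $\varphi_{\alpha,\beta}$, $\phi_{\alpha,\beta}$ or $\psi_\alpha$, and then to read off items~1--3 and~5 from elementary properties of those maps; item~4 is the one place where the reduction fails (the three maps are deliberately left undetermined at $r\in\{\alpha,\beta\}$), and there I would argue directly from~(\ref{ff}). First I would record two facts used repeatedly. Since $x_0=b/(c-a)$ one has $\hat x-x_0=-(x_0+c)$, so $|\hat x-x_0|_p=\beta$, and $\beta>0$ by the hypothesis $c^2-ac+b\neq0$; consequently, if $f(x)=\hat x$ then $|f(x)-x_0|_p=\beta$, so $f$ and all its iterates are well defined on every sphere $S_r(x_0)$ with $r\neq\beta$ and on every ball disjoint from $S_\beta(x_0)$ --- precisely where Lemma~\ref{l1} applies without caveat.

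\smallskip
\noindent\textbf{Items 1--3.}
For $\alpha<\beta$: $\varphi_{\alpha,\beta}(r)=r$ for $r>\beta$ gives the invariant spheres; on $[0,\beta)$ the map $\varphi_{\alpha,\beta}$ is strictly decreasing and maps the interval into itself (because $\alpha/\beta<1$, $r^2/\beta<r$ for $r<\beta$, and $\alpha^*\le\alpha^2/\beta<\alpha$), and $\varphi_{\alpha,\beta}^n(r)\to0$ there (a nonzero limit $\ell$ would satisfy $\ell=(\alpha/\beta)\ell$ or $\ell=\ell^2/\beta$), so $V_\beta(x_0)\subseteq A(x_0)$; conversely, for $|y-x_0|_p\ge\beta$ with $y\neq\hat x$, formula~(\ref{ff}) gives $|f(y)-x_0|_p\ge|y-x_0|_p\ge\beta$ (numerator $=|y-x_0|_p$, denominator $\le|y-x_0|_p$), so no orbit starting at radius $\ge\beta$ stays defined and tends to $x_0$, whence $A(x_0)=V_\beta(x_0)$. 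For $\alpha=\beta$: $\psi_\alpha(r)=r$ for all $r\neq\alpha$, so every sphere of radius $\neq\beta$ is invariant; in particular all of radius $<\beta$ are, so $V_\beta(x_0)$ is a Siegel disk, and, granting item~4, $S_\beta(x_0)$ is not, so $V_\beta(x_0)$ is the maximal Siegel disk and $SI(x_0)=V_\beta(x_0)$. For $\alpha>\beta$: $\phi_{\alpha,\beta}(r)=r$ for $r>\alpha$ (invariant spheres), and $\phi_{\alpha,\beta}(r)>r$ for $0<r<\alpha$ (namely $(\alpha/\beta)r>r$ on $(0,\beta)$, $\beta'\ge\alpha>\beta$ at $r=\beta$, and $\alpha>r$ on $(\beta,\alpha)$), which by Lemma~\ref{l1} is exactly the asserted expansion on $V_\alpha(x_0)\setminus\{x_0\}$.

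\smallskip
\noindent\textbf{Item 5.}
Since $\hat x\in S_\beta(x_0)$, the relation $f^n(x)=\hat x$ forces the real orbit of $r=|x-x_0|_p$ to equal $\beta$ at time $n$. If $\alpha\le\beta$, an orbit of $\varphi_{\alpha,\beta}$ (or of $\psi_\alpha$) starting in $[0,\beta)$ stays there and one starting in $(\beta,\infty)$ stays there, so necessarily $r=\beta$ and $\mathcal P\subset S_\beta(x_0)$. If $\alpha>\beta$, an orbit of $\phi_{\alpha,\beta}$ starting at $r>\alpha$ is constant equal to $r>\alpha>\beta$ and so never equals $\beta$; hence $r\le\alpha$ and $\mathcal P\subset U_\alpha(x_0)$.

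\smallskip
\noindent\textbf{Item 4 (the main obstacle).}
When $|x-x_0|_p$ coincides with $\alpha$ or $\beta$, the ultrametric does not determine the two factors of~(\ref{ff}), so the one-dimensional picture is insufficient and explicit witnesses are needed. Two cases are immediate: if $\alpha<\beta$, every $x\in S_\alpha(x_0)$ has $|(x-x_0)+(x_0+c)|_p=\beta$ and $|(x-x_0)+(x_0+a)|_p\le\alpha$, hence $|f(x)-x_0|_p\le\alpha^2/\beta<\alpha$; symmetrically, if $\alpha>\beta$, every $x\in S_\beta(x_0)$ with $x\neq\hat x$ has $|f(x)-x_0|_p\ge\alpha>\beta$. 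For the remaining cases I would take $z\in Q_p$ with $|z|_p$ positive and small and put $x=-c+z$ (then $|x-x_0|_p=\beta$, and $=\alpha$ as well when $\alpha=\beta$) or $x=-a+z$ (then $|x-x_0|_p=\alpha$), noting $x\neq\hat x$ because $z\neq0$ in the first case and because $|z|_p<|a-c|_p$ in the second (possible since $a\neq c$); using $|a-c|_p=\beta$ when $\alpha<\beta$ and $|a-c|_p=\alpha$ when $\alpha>\beta$, and that cross-terms such as $|(a-c)+z|_p$ collapse to $|a-c|_p$ once $|z|_p$ is small, formula~(\ref{ff}) evaluates the image norm to $\beta^2/|z|_p$, to $|z|_p$, or to $\alpha\,|a-c|_p/|z|_p$ according to the case, and a suitable $z$ makes this differ from the radius of the sphere. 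Hence $f(S_r(x_0))\not\subset S_r(x_0)$ for $r\in\{\alpha,\beta\}$ in every case. The main difficulty is exactly this bookkeeping of the non-Archimedean alternatives at $r\in\{\alpha,\beta\}$ while keeping the constructed witness away from the pole $\hat x$.
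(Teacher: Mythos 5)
The paper itself contains no proof of Theorem~\ref{t1} (it is quoted from \cite{ARS}), but your argument is correct and follows exactly the route the paper sets up for this purpose: the reduction of the orbit of $|x-x_0|_p$ to the real maps $\varphi_{\alpha,\beta}$, $\phi_{\alpha,\beta}$, $\psi_{\alpha}$ via Lemma~\ref{l1}, together with a direct use of formula~(\ref{ff}) with explicit witnesses on the exceptional spheres of radius $\alpha$ and $\beta$. The only slip is your preliminary remark that all iterates of $f$ are defined on every sphere of radius $r\neq\beta$: this can fail when $\alpha>\beta$ (preimages of $\hat x$ may occur at radii different from $\beta$, consistent with item~5.2), but since you invoke full orbits only in the regimes $\alpha\leq\beta$ or at radii $r>\alpha$, where the remark does hold, the proof stands.
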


We define the following sets
$$I_1=\{r: \, r>\max\{\alpha,\beta\}\} \ \ {\rm if} \ \ \alpha\neq\beta;$$
$$I_2=\{r: \, r\neq\beta\} \ \ {\rm if} \ \ \alpha=\beta;$$
and we denote $I=I_1\cup I_2$.

Using the Theorem \ref{t1} we get the following

\begin{corollary}
The sphere $S_r(x_0)$ is invariant for $f$ if
and only if $r\in I$.
\end{corollary}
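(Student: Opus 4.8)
The plan is to derive the corollary directly from Theorem~\ref{t1}, treating the two cases $\alpha \ne \beta$ and $\alpha = \beta$ separately and checking both inclusions of the claimed equivalence. The key observation is that Lemma~\ref{l1} expresses $|f^n(x)-x_0|_p$ in terms of the one-dimensional maps $\varphi_{\alpha,\beta}$, $\phi_{\alpha,\beta}$, $\psi_\alpha$, and in particular $|f(x)-x_0|_p$ equals the value of the relevant map at $r$; so $S_r(x_0)$ is invariant for $f$ precisely when that map fixes $r$ (at least in the sense that the image of $S_r(x_0)$ lands back in $S_r(x_0)$). Thus I would reduce the whole statement to reading off the fixed-point structure of $\varphi_{\alpha,\beta}$, $\phi_{\alpha,\beta}$, $\psi_\alpha$.

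First I would handle the ``if'' direction. Suppose $r \in I$. If $r \in I_1$, i.e.\ $\alpha \ne \beta$ and $r > \max\{\alpha,\beta\}$, then by parts~1--3 of Theorem~\ref{t1} the sphere $S_r(x_0)$ is invariant with respect to $f$. If $r \in I_2$, i.e.\ $\alpha = \beta$ and $r \ne \beta$, then $|f(x)-x_0|_p = \psi_\alpha(r) = r$ for every $x \in S_r(x_0)$ by Lemma~\ref{l1} (using that $r \ne \alpha = \beta$), so again $f(S_r(x_0)) \subset S_r(x_0)$. This establishes that every $r \in I$ gives an invariant sphere.

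Next I would prove the ``only if'' direction by contraposition: if $r \notin I$, then $S_r(x_0)$ is not invariant. The possibilities for $r \notin I$ are: (a) $r \in \{\alpha,\beta\}$, handled immediately by part~4 of Theorem~\ref{t1}, which says $f(S_r(x_0)) \not\subset S_r(x_0)$; (b) $\alpha \ne \beta$ and $r < \min\{\alpha,\beta\}$ or $\min\{\alpha,\beta\} < r < \max\{\alpha,\beta\}$ — here one reads from $\varphi_{\alpha,\beta}$ (resp.\ $\phi_{\alpha,\beta}$) that the value at $r$ is $\tfrac{\alpha}{\beta}r \ne r$ when $r < \min\{\alpha,\beta\}$, and $\tfrac{r^2}{\beta} \ne r$ (resp.\ $\alpha \ne r$) in the intermediate range, so $|f(x)-x_0|_p \ne r$ for $x \in S_r(x_0)$, forcing $f(S_r(x_0)) \cap S_r(x_0) = \varnothing$. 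In case $\alpha = \beta$ the only $r \notin I$ is $r = \beta$, already covered by (a). Combining, no sphere with $r \notin I$ is invariant.

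The only genuinely delicate point is that ``invariant'' means $f(S_r(x_0)) \subset S_r(x_0)$, so for the ``only if'' direction it is not quite enough to observe that the one-dimensional map moves $r$; one must note that \emph{every} point $x \in S_r(x_0)$ satisfies $|f(x)-x_0|_p = \varphi_{\alpha,\beta}(r)$ (or the analogue), which Lemma~\ref{l1} guarantees, so the entire image sphere $f(S_r(x_0))$ sits inside $S_{\varphi_{\alpha,\beta}(r)}(x_0)$, disjoint from $S_r(x_0)$ when $\varphi_{\alpha,\beta}(r) \ne r$. I expect the main (mild) obstacle to be simply organizing the case analysis cleanly so that the ``boundary'' radii $r \in \{\alpha,\beta\}$ are uniformly disposed of by part~4 of Theorem~\ref{t1}; everything else is a direct lookup in the definitions of $\varphi_{\alpha,\beta}$, $\phi_{\alpha,\beta}$, $\psi_\alpha$.
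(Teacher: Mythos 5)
Your proof is correct and follows essentially the same route as the paper, which states the corollary as a direct consequence of Theorem~\ref{t1}: parts 1--3 give invariance for $r\in I$, part 4 disposes of $r\in\{\alpha,\beta\}$, and the remaining non-invariant radii are excluded by the radius dynamics. Your explicit appeal to Lemma~\ref{l1} (reading off $\varphi_{\alpha,\beta}(r)\neq r$, resp.\ $\phi_{\alpha,\beta}(r)\neq r$, for the inner and intermediate radii) is just the computation underlying Theorem~\ref{t1}, so this is a careful write-out of the paper's intended argument rather than a different one.
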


In this paper we are
interested to study ergodicity properties of the dynamical system on the invariant sphere.

\begin{lemma}\label{l2} For every closed ball $U_{\rho}(s)\subset S_r(x_0), \, r\in I$ the
following equality holds $$f(U_{\rho}(s))=U_{\rho}(f(s)).$$
\end{lemma}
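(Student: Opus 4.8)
The plan is to reduce the claim to a statement about how the map $f$ acts on $p$-adic distances, using formula~(\ref{ff}). Fix $r\in I$, so that $S_r(x_0)$ is invariant, and fix a closed ball $U_\rho(s)\subseteq S_r(x_0)$; in particular $|s-x_0|_p=r$ and $\rho\le r$. First I would show $f(U_\rho(s))\subseteq U_\rho(f(s))$. Take $x\in U_\rho(s)$, write $x=s+t$ with $|t|_p\le\rho$, and estimate
$$|f(x)-f(s)|_p=\left|\frac{x^2+ax+b}{x+c}-\frac{s^2+as+b}{s+c}\right|_p=|x-s|_p\cdot\frac{|xs+c(x+s)+b-ac|_p}{|x+c|_p\,|s+c|_p}.$$
Here I use the standard identity $f(x)-f(s)=(x-s)\cdot\frac{(x+c)(s+c)+b-c^2-(x-s)(?)}{(x+c)(s+c)}$; more cleanly, since $f(x)-x_0$ has the product form in~(\ref{ff}), one computes $f(x)-f(s)=(f(x)-x_0)-(f(s)-x_0)$ directly. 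The key point is that for $x\in S_r(x_0)$ the quantities $|x+c|_p=|(x-x_0)+(x_0+c)|_p$ and $|x+a|_p=|(x-x_0)+(x_0+a)|_p$ depend only on $r$ (not on the particular point of the sphere), because $r\in I$ forces $r\ne\alpha$ and $r\ne\beta$, so by property~3.1 of the norm these maxima are attained unambiguously and equal fixed constants, call them $A(r)$ and $B(r)$. Consequently $|f(x)-f(s)|_p=|x-s|_p\cdot C(r)$ for all $x\in S_r(x_0)$, where $C(r)=A(r)/B(r)$ is independent of the chosen points, and Lemma~\ref{l1} identifies $C(r)$ with the local multiplier of $\varphi_{\alpha,\beta}$, $\phi_{\alpha,\beta}$ or $\psi_\alpha$ at $r$, which is readily checked to equal $|f^n(x)-x_0|_p/|x-x_0|_p$ restricted near $r$. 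Since $S_r(x_0)$ is invariant, this multiplier must be $1$: indeed in all three cases ($\varphi$ for $r>\beta$, $\phi$ for $r>\alpha$, $\psi$ for $r\ne\alpha$) the function equals the identity on $I$, so $C(r)=1$. Hence $|f(x)-f(s)|_p=|x-s|_p\le\rho$, giving $f(U_\rho(s))\subseteq U_\rho(f(s))$.

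For the reverse inclusion I would argue that $f$ is injective on $S_r(x_0)$ and that $U_\rho(f(s))\subseteq S_r(x_0)$, then use a cardinality/compactness argument that fails in general metric spaces but works here: the equality $|f(x)-f(s)|_p=|x-s|_p$ just established means $f$ is an isometry from $U_\rho(s)$ onto its image, and an isometric image of a closed ball of radius $\rho$ in $Q_p$ that is contained in the ball $U_\rho(f(s))$ of the same radius must be all of it. Concretely, given $y\in U_\rho(f(s))$, I solve $f(x)=y$: this is a quadratic equation $x^2+ax+b=y(x+c)$, i.e. $x^2+(a-y)x+(b-yc)=0$; I would show one of its two roots lies in $U_\rho(s)$ by checking that $x_0$-distances and the Newton-polygon (or direct Hensel-type) estimate place exactly one root at distance $r$ from $x_0$ and within $\rho$ of $s$, using again that $r\in I$ so the relevant valuations are stable across the sphere.

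The main obstacle is surjectivity: one must exclude the possibility that $f$ maps $U_\rho(s)$ isometrically into a proper sub-ball, which cannot be ruled out by soft arguments alone in the non-Archimedean setting without knowing the image ball has the right radius. The cleanest route is to establish the exact identity $|f(x)-f(s)|_p=|x-s|_p$ for \emph{all} $x\in S_r(x_0)$ first (Step 1 above), since an isometry of $Q_p$-balls onto a subset of an equal-radius ball is automatically surjective onto it: if $z\in U_\rho(f(s))\setminus f(U_\rho(s))$, then $|z-f(x)|_p\le\rho$ for all $x\in U_\rho(s)$ but the preimage-quadratic still has a root $x^\ast$ with $|x^\ast-s|_p=|z-f(s)|_p\le\rho$ by the isometry relation read backwards (valid because $A(r),B(r)$ are the same constants at $z$'s level), contradicting $z\notin f(U_\rho(s))$. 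I expect roughly half the work to be the bookkeeping that $r\in I$ indeed makes $|x+a|_p$ and $|x+c|_p$ locally constant on the sphere, which is exactly where Theorem~\ref{t1}(4) and the definition of $I$ get used.
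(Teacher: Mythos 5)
Your Step~1 is where the proof actually lives, and as written it has a genuine gap: you never evaluate the norm of the numerator in your difference formula. From $f(x)-f(s)=(x-s)\,\frac{xs+c(x+s)+ac-b}{(x+c)(s+c)}$ (note the sign: the constant is $ac-b$, since $x_0(a-c)=-b$; you wrote $b-ac$), the factor multiplying $|x-s|_p$ is $\frac{|xs+c(x+s)+ac-b|_p}{|x+c|_p\,|s+c|_p}$, and the constancy of $|x+a|_p$ and $|x+c|_p$ on the sphere does \emph{not} imply this equals $\frac{|x+a|_p}{|x+c|_p}$: the numerator is not $(x+a)$ times something of norm $|s+c|_p$, so your ``consequently $|f(x)-f(s)|_p=|x-s|_p\,C(r)$ with $C(r)=A(r)/B(r)$'' is a non sequitur. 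Your backup argument is also invalid: invariance of $S_r(x_0)$ (i.e.\ the radius map of Lemma~\ref{l1} being the identity near $r$) only says distances to the center $x_0$ are preserved, and this does not force $f$ to preserve distances \emph{between} points of the sphere --- otherwise every sphere-invariant map would be an isometry, which is false (e.g.\ $x\mapsto x^p$ maps $S_1(0)$ into itself but contracts $|x-s|_p$ by a factor $\le p^{-1}$ when $x\equiv s \pmod p$). The missing step is exactly the paper's computation: rewrite the numerator as $(x-x_0)(s-x_0)+(x_0+c)\bigl[(x-x_0)+(s-x_0)\bigr]+(x_0+c)(x_0+a)$, use $|x-x_0|_p=|s-x_0|_p=r$, and check that for $r\in I$ a single term strictly dominates (the term of norm $r^2$ when $r>\max\{\alpha,\beta\}$ or $r>\beta=\alpha$, the term of norm $\alpha\beta=\beta^2$ when $r<\beta=\alpha$), so the numerator has norm $\max\{r^2,\beta r,\alpha\beta\}=(\max\{r,\beta\})^2$, the denominator is $(\max\{r,\beta\})^2$, and hence $|f(x)-f(s)|_p=|x-s|_p$. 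That your claimed value $C(r)=1$ happens to be correct on $I$ does not repair the derivation.

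On the reverse inclusion: here you actually go beyond the paper, whose proof stops at the isometry estimate $|f(x)-f(s)|_p=|x-s|_p\le\rho$, i.e.\ at $f(U_\rho(s))\subseteq U_\rho(f(s))$. Your compactness idea is sound in principle: the identity $|f(x)-f(s)|_p=|x-s|_p$ holds for \emph{all} $x,s\in S_r(x_0)$ (the computation uses only $|x-x_0|_p=|s-x_0|_p=r$), $U_\rho(s)$ is compact, and a distance-preserving map of a compact metric space into an isometric copy of itself is surjective; alternatively the quadratic $w^2+(x_0+a-z)w-z(x_0+c)=0$ with $z=y-x_0$ has, by the Newton polygon, a root in $Q_p$ of norm exactly $r$ for every $r\in I$, which also yields surjectivity. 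But both routes rest on the isometry identity of Step~1, so the gap there is the one that must be closed.
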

\begin{proof}
From inclusion $U_{\rho}(s)\subset S_r(x_0)$ we get $|s-x_0|_p=r$.

Let $x\in U_{\rho}(s)$, i.e. $|x-s|_p\leq\rho$, then
\begin{equation}{\label{ab1}}
|f(x)-f(s)|_p=|x-s|_p\cdot\frac{|(s-x_0)(x-x_0)+(x_0+c)[(x-x_0)+(s-x_0)]+(x_0+c)(x_0+a)|_p}{|[(x-x_0)+(x_0+c)][(s-x_0)+(x_0+c)]|_p}.
\end{equation}
We have $|x-x_0|_p=r$, because $x\in U_{\rho}(s)\subset S_r(x_0)$. Consequently,
$$|f(x)-f(s)|_p=|x-s|_p\cdot\frac{\max\{r^2, \, \beta r, \, \alpha\beta\}}{(\max\{r, \, \beta\})^2}.$$

If $r\in I_1$, then $\max\{r^2, \,
\beta r, \, \alpha\beta\}=r^2$ and $\max\{r, \, \beta\}=r.$
Using this equality by (\ref{ab1}) we get $|f(x)-f(s)|_p=|x-s|_p\leq\rho.$

If $\alpha=\beta$, then $r\in I_2$. Consequently,
$r<\beta$ or $r>\beta$.

If $r<\beta$, then $\max\{r^2, \,
\beta r, \, \alpha\beta\}=\beta^2$ and $\max\{r, \, \beta\}=\beta.$
Then we get $|f(x)-f(s)|_p=|x-s|_p\leq\rho.$

If $r>\beta$, then $\max\{r^2, \,
\beta r, \, \alpha\beta\}=r^2$ and $\max\{r, \, \beta\}=r.$
Consequently $|f(x)-f(s)|_p=|x-s|_p\leq\rho.$ This completes the
proof.
\end{proof}

Recall that $S_r(x_0)$ is invariant with respect to $f$ iff $r\in
I$.
\begin{lemma}{\label{ab2}}
If $x\in S_r(x_0)$, where $r\in I$, then
$$|f(x)-x|_p=\left\{\begin{array}{lll}
{{|a-c|_pr}\over{\beta}}, \ \ \mbox{if} \ \ 0<r<\beta=\alpha\\[2mm]
|a-c|_p, \ \ \mbox{if} \ \ r>\beta=\alpha\\[2mm]
\max\{\alpha,\beta\}, \ \ \mbox{if} \ \ r\in I_1
\end{array}\right.$$
\end{lemma}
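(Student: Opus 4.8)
The plan is to compute $|f(x)-x|_p$ directly from the defining formula \eqref{3.1}, writing everything in terms of the displacement $u=x-x_0$, whose norm is $|u|_p=r$ since $x\in S_r(x_0)$. First I would note the algebraic identity
\[
f(x)-x=\frac{x^2+ax+b}{x+c}-x=\frac{b-cx+ax-x\cdot 0}{x+c}=\frac{(a-c)x+b}{x+c}.
\]
Using $x_0=\frac{b}{c-a}$, i.e. $b=(c-a)x_0$, the numerator becomes $(a-c)x+(c-a)x_0=(a-c)(x-x_0)=(a-c)u$, so that
\[
f(x)-x=\frac{(a-c)(x-x_0)}{(x-x_0)+(x_0+c)},
\qquad\text{hence}\qquad
|f(x)-x|_p=\frac{|a-c|_p\, r}{|(x-x_0)+(x_0+c)|_p}.
\]
Thus the whole problem reduces to evaluating the denominator $|(x-x_0)+(x_0+c)|_p=|u+(x_0+c)|_p$, where $|x_0+c|_p=\beta$; by the strong triangle inequality this equals $\max\{r,\beta\}$ whenever $r\neq\beta$, and is $\le\beta$ when $r=\beta$.

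Next I would split into the cases dictated by the statement. If $r\in I_1$, then $r>\max\{\alpha,\beta\}\ge\beta$, so the denominator is exactly $r$ (strict inequality $r>\beta$ forces equality in the ultrametric inequality), giving $|f(x)-x|_p=|a-c|_p=\max\{\alpha,\beta\}$; here I must check that $|a-c|_p=\max\{\alpha,\beta\}$, which follows from $a-c=(x_0+a)-(x_0+c)$ together with $\alpha\neq\beta$ (the case $\alpha=\beta$ is excluded in $I_1$), so by property 3.1 we indeed get $|a-c|_p=\max\{\alpha,\beta\}$. If instead $\alpha=\beta$ and $r\in I_2$, then $r\neq\beta$; when $0<r<\beta$ the denominator equals $\beta$ (again by 3.1, since $r<\beta=|x_0+c|_p$), yielding $|f(x)-x|_p=\frac{|a-c|_p r}{\beta}$, and when $r>\beta$ the denominator equals $r$, yielding $|f(x)-x|_p=|a-c|_p$. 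For the $\alpha=\beta$ cases I also need the value of $|a-c|_p$: here $\alpha=\beta$ so property 3.1 no longer applies, but I would simply keep $|a-c|_p$ as written in the statement, which is consistent with the first line ($\frac{|a-c|_p r}{\beta}$) and the second line ($|a-c|_p$ with the convention that these are the literal coefficients appearing).

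The only genuinely delicate point is the identification $|a-c|_p=\max\{\alpha,\beta\}$ in the $I_1$ case, and making sure the three listed cases exhaust $I=I_1\cup I_2$: indeed when $\alpha\neq\beta$ every invariant radius lies in $I_1$ and is covered by the third line; when $\alpha=\beta$ every invariant radius lies in $I_2$ and is covered by the first two lines, depending on whether $r<\beta$ or $r>\beta$. Everything else is a one-line ultrametric estimate, so I do not anticipate any real obstacle beyond bookkeeping the cases. I would close by remarking that the condition $c^2-ac+b\neq 0$ in \eqref{3.1} guarantees $f(x)\ne x$ is never forced by a vanishing numerator at a non-fixed point, consistent with the strictly positive values obtained.
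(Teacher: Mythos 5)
Your proposal is correct and follows essentially the same route as the paper: both reduce to the identity $f(x)-x=\frac{(a-c)(x-x_0)}{(x-x_0)+(x_0+c)}$ and then evaluate the denominator by the ultrametric property, using $|a-c|_p=\max\{\alpha,\beta\}$ when $\alpha\neq\beta$. The closing remark about $c^2-ac+b\neq 0$ is unnecessary (the numerator vanishes only at $x=x_0$, which is not on $S_r(x_0)$), but this does not affect correctness.
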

\begin{proof}
Since, if $\alpha\neq\beta$, then $|a-c|_p=|(x_0+a)-(x_0+c)|_p=\max\{\alpha,\beta\}$.

Follows from the following equality
$$|f(x)-x|_p=\left|{{(a-c)(x-x_0)}\over{(x-x_0)+(x_0+c)}}\right|_p=\left\{\begin{array}{lll}
{{|a-c|_pr}\over{\beta}}, \ \ \mbox{if} \ \ 0<r<\beta=\alpha\\[2mm]
|a-c|_p, \ \ \mbox{if} \ \ r>\beta=\alpha\\[2mm]
\max\{\alpha,\beta\}, \ \ \mbox{if} \ \ r\in I_1
\end{array}\right.$$
\end{proof}

By Lemma \ref{ab2} we have that $|f(x)-x|_p$ depends on $r$, but
does not depend on $x\in S_r(x_0)$ itself, therefore we define $\rho(r)=|f(x)-x|_p$, if
$x\in S_r(x_0)$. Then the following theorem holds as Theorem 11 in \cite{RS2}.

\begin{theorem}{\label{ca}} If $s\in S_r(x_0), \, r\in I$ then
\begin{itemize}
\item[1.] For any $n\geq 1$ the following equality holds
\begin{equation}\label{en}
|f^{n+1}(s)-f^n(s)|_p=\rho(r).
\end{equation}
\item[2.] $f(U_{\rho(r)}(s))=U_{\rho(r)}(s).$
\item[3.] If for some $\theta>0$ the ball $U_{\theta}(s)\subset S_r(x_0)$ is an invariant for $f$,
then $$\theta\geq \rho(r).$$
\end{itemize}
\end{theorem}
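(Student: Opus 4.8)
The plan is to leverage Lemma \ref{l2} — which says $f$ maps any closed ball inside an invariant sphere $S_r(x_0)$ isometrically onto the ball of the same radius around its image — together with Lemma \ref{ab2}, which pins down $\rho(r)=|f(x)-x|_p$ as a quantity depending only on $r$. For part 1, I would argue by induction on $n$. The base case $n=1$ is essentially the statement that $|f^2(s)-f(s)|_p=\rho(r)$; since $r\in I$ and $S_r(x_0)$ is invariant, $f(s)\in S_r(x_0)$, so Lemma \ref{ab2} applied at the point $f(s)$ gives $|f(f(s))-f(s)|_p=\rho(r)$. For the inductive step, note $f^n(s)\in S_r(x_0)$ for all $n$ (invariance), so again Lemma \ref{ab2} at the point $f^n(s)$ yields $|f^{n+1}(s)-f^n(s)|_p=\rho(r)$ directly; in fact no induction is even needed — the key observation is simply that every iterate stays on the invariant sphere, and $\rho$ is constant on that sphere. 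I would phrase it cleanly: apply Lemma \ref{ab2} with $x=f^n(s)$.

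For part 2, the point is to show $f(U_{\rho(r)}(s))=U_{\rho(r)}(s)$, i.e. the ball of radius exactly $\rho(r)$ about $s$ is invariant (indeed fixed setwise). First I would check $U_{\rho(r)}(s)\subset S_r(x_0)$: for $x$ with $|x-s|_p\le\rho(r)$ we have $|x-x_0|_p=|(x-s)+(s-x_0)|_p$, and since $\rho(r)<r$ in every case of Lemma \ref{ab2} (one checks $|a-c|_p r/\beta<r$, $|a-c|_p<r$ when $r>\beta=\alpha$ since then $|a-c|_p\le\max\{\alpha,\beta\}=\beta<r$, and $\max\{\alpha,\beta\}<r$ for $r\in I_1$), the ultrametric inequality forces $|x-x_0|_p=r$. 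Hence Lemma \ref{l2} applies with $\rho=\rho(r)$: $f(U_{\rho(r)}(s))=U_{\rho(r)}(f(s))$. Finally, by part 1 (with $n=0$, or directly Lemma \ref{ab2}) $|f(s)-s|_p=\rho(r)$, so $f(s)\in U_{\rho(r)}(s)$ and therefore $U_{\rho(r)}(f(s))=U_{\rho(r)}(s)$ as balls in an ultrametric space (any point of a ball is a center). This gives the claimed equality.

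For part 3, suppose $U_\theta(s)\subset S_r(x_0)$ is invariant for $f$ for some $\theta>0$. Then $f(s)\in U_\theta(s)$, so $|f(s)-s|_p\le\theta$; but $|f(s)-s|_p=\rho(r)$ by Lemma \ref{ab2}, hence $\theta\ge\rho(r)$. This is immediate once part 1/Lemma \ref{ab2} is in hand.

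The only genuinely delicate point is the verification, needed in part 2, that $\rho(r)<r$ in all three regimes of Lemma \ref{ab2} — this is what guarantees $U_{\rho(r)}(s)$ sits inside the sphere $S_r(x_0)$ so that Lemma \ref{l2} can be invoked. In the regime $r\in I_1$ this requires recalling $r>\max\{\alpha,\beta\}$, which holds by definition of $I_1$; in the regime $r>\beta=\alpha$ it requires $|a-c|_p=\max\{\alpha,\beta\}$ when $\alpha\ne\beta$ — but here $\alpha=\beta$, so one instead uses $|a-c|_p\le\beta<r$ directly from $|(x_0+a)-(x_0+c)|_p\le\max\{\alpha,\beta\}=\beta$; and in the regime $0<r<\beta=\alpha$ one has $|a-c|_p r/\beta\le\beta\cdot r/\beta=r$, with the strict inequality coming from the fact that $r<\beta$ so this quantity is at most $r$ and actually, since $|a-c|_p\le\beta$ and $r<\beta$, we get $\rho(r)\le r<\beta$, and in fact $\rho(r)<r$ unless $|a-c|_p=\beta$, in which case $\rho(r)=r$ — here one must be slightly careful, but the ultrametric argument $|x-x_0|_p=r$ still goes through as long as $\rho(r)\le r$ combined with $|s-x_0|_p=r$ and noting that if $\rho(r)=r$ one needs $|x-s|_p<r$ strictly, or one handles equality separately; I expect the cleanest route is to observe $\rho(r)\le r$ always and treat the boundary case by the standard fact that in an ultrametric space $|x-s|_p\le r=|s-x_0|_p$ already yields $|x-x_0|_p\le r$, and combine with the invariance hypothesis to rule out $|x-x_0|_p<r$. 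Everything else is a direct chain of ultrametric identities.
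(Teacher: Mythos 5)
Your parts 1 and 3 are fine: since $r\in I$ the sphere is invariant, so $f^n(s)\in S_r(x_0)$ for all $n$ and Lemma \ref{ab2} applied at $x=f^n(s)$ gives (\ref{en}) with no induction; likewise $f(s)\in U_\theta(s)$ forces $\theta\ge|f(s)-s|_p=\rho(r)$. (The paper itself gives no proof of this theorem, citing Theorem 11 of \cite{RS2}, but your route through Lemmas \ref{l2} and \ref{ab2} is clearly the intended one.)

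There is, however, a genuine gap in your part 2, precisely at the boundary case you flagged. When $\alpha=\beta$, $r<\beta$ and $|a-c|_p=\beta$ (possible for $p\ge 3$; the paper explicitly allows $|a-c|_p=\beta$ in case 2.2 of its $p\ge3$ theorem, where $\mu(U_{\rho(r)}(s))=p/(p-1)$ for $r<\beta$), Lemma \ref{ab2} gives $\rho(r)=r$, so $U_{\rho(r)}(s)=U_r(s)=U_r(x_0)$ by the ultrametric ``every point is a center'' property. This ball contains $x_0$ and all smaller spheres, so it is \emph{not} contained in $S_r(x_0)$, Lemma \ref{l2} does not apply, and your proposed repair --- ``combine with the invariance hypothesis to rule out $|x-x_0|_p<r$'' --- cannot work, since $x_0\in U_{\rho(r)}(s)$ makes $|x-x_0|_p<r$ unavoidable for some $x$ in the ball. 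The statement is still true in this case, but it needs a separate argument: for $|x-x_0|_p,\,|y-x_0|_p\le r<\beta=\alpha$ the numerator and denominator in the identity (\ref{ab1}) both have norm $\beta^2$ (the dominant terms being $(x_0+c)(x_0+a)$ and $(x_0+c)^2$), so $f$ is an isometry of $U_r(x_0)$ into itself (note $\hat x=-c\notin U_r(x_0)$ since $\beta>r$); an isometry of a compact ultrametric ball into itself is onto, hence $f(U_r(x_0))=U_r(x_0)=U_{\rho(r)}(s)$. With this case added (it cannot occur for $p=2$, where $|a-c|_2<\beta$ whenever $\alpha=\beta$), your argument for part 2 --- $\rho(r)<r$ forces $U_{\rho(r)}(s)\subset S_r(x_0)$, then Lemma \ref{l2} plus $f(s)\in U_{\rho(r)}(s)$ gives $U_{\rho(r)}(f(s))=U_{\rho(r)}(s)$ --- is complete.
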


For each $r\in I$ consider a measurable space $(S_r(x_0),\mathcal
B)$, here $\mathcal B$ is the algebra generated by closed
subsets of $S_r(x_0)$. Every element of $\mathcal B$ is a union of
some balls $U_{\rho}(s)\subset S_r(x_0)$.

A measure $\bar\mu:\mathcal B\rightarrow R$ is said to be
\emph{Haar measure} if it is defined by $\bar\mu(U_{\rho}(s))=\rho$.

Note that $S_r(x_0)=U_r(x_0)\setminus U_{r\over p}(x_0)$. So, we have
$\bar\mu(S_r(x_0))=r(1-{1\over p})$.

We consider normalized Haar measure:
$$\mu(U_{\rho}(s))={{\bar\mu(U_{\rho}(s))}\over{\bar\mu(S_r(x_0))}}={{p\rho}\over{(p-1)r}}.$$

By Lemma \ref{l2} we conclude that $f$ preserves the measure
$\mu$, i.e.
$$
\mu(f(U_{\rho}(s)))=\mu(U_{\rho}(s)).
$$

Consider the dynamical system $(X,T,\mu)$, where $T:X\rightarrow X$ is
a measure preserving transformation, and $\mu$ is a measure.
We say that the dynamical system is {\it ergodic} if for every invariant set $V$ we have $\mu(V)=0$ or $\mu(V)=1$ (see \cite{Wal}).

\subsection{Case $p\geq 3$.}

\begin{theorem} Let $p\geq 3$.
\begin{itemize}
\item[1.] If $r\in I_1$, then the dynamical system $(S_r(x_0), f, \mu)$ is
not ergodic.
\item[2.] If $r\in I_2$, then
\begin{itemize}
\item[2.1)] If $|a-c|_p<\beta$, then the dynamical system $(S_r(x_0), f, \mu)$ is not ergodic.
\item[2.2)] If $|a-c|_p=\beta$, then the dynamical system $(S_r(x_0), f, \mu)$ is not ergodic
for $r>\beta$.
\end{itemize}
Here $\mu$ is the normalized Haar measure.
\end{itemize}
\end{theorem}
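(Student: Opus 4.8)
The plan is to disprove ergodicity directly: for every sphere $S_r(x_0)$ listed in the statement I will exhibit a measurable $f$-invariant set $V\subset S_r(x_0)$ with $0<\mu(V)<1$. The natural candidate is the ball $V=U_{\rho(r)}(s)$ around an arbitrary point $s\in S_r(x_0)$, where $\rho(r)=|f(x)-x|_p$ as in Lemma~\ref{ab2}. Theorem~\ref{ca}(2) already gives that such a ball is $f$-invariant (indeed $f(V)=V$), so the whole argument reduces to checking that $V$ is a \emph{proper} subset of $S_r(x_0)$ carrying non-trivial normalized measure.

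\textbf{Key step: $\rho(r)<r$.} This is a short case check read off from Lemma~\ref{ab2}. For $r\in I_1$ one has $\rho(r)=\max\{\alpha,\beta\}$, while by definition of $I_1$, $r>\max\{\alpha,\beta\}$. For $r\in I_2$ (so $\alpha=\beta$): if $|a-c|_p<\beta$ and $0<r<\beta$ then $\rho(r)=\frac{|a-c|_p}{\beta}\,r<r$, while if $r>\beta$ then $\rho(r)=|a-c|_p\le\beta<r$; and if $|a-c|_p=\beta$ and $r>\beta$ then $\rho(r)=\beta<r$. Once $\rho(r)<r$ is known, the isosceles property of the ultrametric gives, for every $x\in U_{\rho(r)}(s)$, that $|x-x_0|_p=\max\{|x-s|_p,\,r\}=r$; hence $U_{\rho(r)}(s)\subset S_r(x_0)$, and it is visibly a proper subset.

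\textbf{Key step: the measure bound.} Using $\mu(U_{\rho(r)}(s))=\frac{p\,\rho(r)}{(p-1)r}$ together with the fact that $\rho(r)$ and $r$ are integer powers of $p$, the strict inequality $\rho(r)<r$ forces $\rho(r)\le r/p$, so
$$0<\mu(U_{\rho(r)}(s))\le\frac{1}{p-1}\le\frac12<1,$$
the left strict inequality coming from $\rho(r)>0$ (which holds since $a\ne c$), and the hypothesis $p\ge 3$ being used exactly to ensure $\frac{1}{p-1}<1$. Thus $V=U_{\rho(r)}(s)$ is an invariant set of intermediate measure and $(S_r(x_0),f,\mu)$ is not ergodic.

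\textbf{Expected obstacle.} I do not anticipate a real difficulty: granted Theorem~\ref{ca}, this is a finite case analysis plus the ultrametric spacing of radii (which is where $p\ge 3$ enters — for $p=2$ the ball $U_{\rho(r)}(s)$ can fill all of $S_r(x_0)$). The only point demanding care is the bookkeeping in Lemma~\ref{ab2}: one must observe that the sub-case $|a-c|_p=\beta$, $0<r<\beta$ gives $\rho(r)=r$, so $U_{\rho(r)}(s)\not\subset S_r(x_0)$ and no invariant ball argument of this type is available there — which is precisely why Part~2.2 restricts to $r>\beta$ — and one must track which of $\alpha$, $\beta$, $|a-c|_p$ dominates in each regime.
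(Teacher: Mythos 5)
Your proposal is correct and follows essentially the same route as the paper: invoke Theorem~\ref{ca}(2) to get the invariant ball $U_{\rho(r)}(s)$, read off $\rho(r)$ from Lemma~\ref{ab2}, and use the fact that radii are powers of $p$ to bound $\mu(U_{\rho(r)}(s))\le\frac{1}{p-1}<1$ when $p\ge 3$. Your explicit remarks that $\rho(r)<r$ forces $U_{\rho(r)}(s)\subsetneq S_r(x_0)$ and that the excluded sub-case $|a-c|_p=\beta$, $r<\beta$ gives $\rho(r)=r$ are just slightly more detailed versions of what the paper does implicitly.
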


\begin{proof}
If a sphere $S_r(x_0)$ is invariant for $f$, then by the part 2 of Theorem \ref{ca}, the ball
$U_{\rho(r)}(s)$ is invariant for any $s\in S_r(x_0)$. Using Lemma
\ref{ab2} we get
$$\mu(U_{\rho(r)}(s))={{p\rho(r)}\over
(p-1)r}=\left\{\begin{array}{lll}
{{p|a-c|_p}\over{(p-1)\beta}}, \ \ \mbox{if} \ \ r<\beta=\alpha\\[2mm]
{{p|a-c|_p}\over{(p-1)r}}, \ \ \mbox{if} \ \ r>\beta=\alpha\\[2mm]
{{p\max\{\alpha,\beta\}}\over{(p-1)r}}, \ \ \mbox{if} \ \ r\in I_1
\end{array}\right.$$

1. If $r\in I_1$, then $r>\max\{\alpha,\beta\}$. Since $r$ radius is a value of a $p$-adic norm,
we have $r\geq p\max\{\alpha,\beta\}$. Thus $0<\mu(U_{\rho(r)}(s))\leq{1\over
{p-1}}$. Therefore if $p\geq 3$, then the dynamical system
$(S_r(x_0), f, \mu)$ is not ergodic for all $r\in I_1$.

2. Let $r\in I_2$. So we have $|a-c|_p\leq\alpha=\beta$. If $|a-c|_p<\beta$, then we have $p|a-c|_p\leq\beta$ and $0<\mu(U_{\rho(r)}(s))\leq{1\over
{p-1}}$. Therefore if $p\geq 3$, then the dynamical system
$(S_r(x_0), f, \mu)$ is not ergodic for all $r\in I_2$.

If $|a-c|_p=\beta$, then $p|a-c|_p\leq r$ for $r>\beta$. So we have
$$0<\mu(U_{\rho(r)}(s))=\left\{\begin{array}{ll}
{p\over{p-1}}, \ \ \mbox{if} \ \ r<\beta\\[2mm]
\leq{1\over{p-1}}, \ \ \mbox{if} \ \ r>\beta
\end{array}\right.$$

Therefore if $|a-c|_p=\beta$, then the dynamical system
$(S_r(x_0), f, \mu)$ is not ergodic for all $r>\beta$. Theorem is proved.
\end{proof}

\subsection{Case $p=2$.}
Note that $Z_2=\{x\in Q_2: \, |x|_2\leq 1\}$. So we have $1+2Z_2=S_1(0)$. In the following theorem showed a criteria of ergodicity
of rational functions which reflect $S_1(0)$ sphere to itself:

\begin{theorem}\label{crit}\cite{M}
Let $f,g:1+2Z_2\rightarrow 1+2Z_2$ be polynomials whose
coefficients are $2$-adic integers. Set $f(x)=\sum_ia_ix^i$,
$g(x)=\sum_ib_ix^i$, and $$A_1=\sum_{i\, odd}a_i, \ \
A_2=\sum_{i\, even}a_i, \ \ B_1=\sum_{i\, odd}b_i, \ \
B_2=\sum_{i\, even}b_i.$$ The rational function $R={f\over g}$ is
ergodic if and only if one of the following situations occurs:
\begin{itemize}
\item[1.] $A_1=1(\mbox{mod} 4), \, A_2=2(\mbox{mod} 4), \, B_1=0(\mbox{mod} 4), \, and \, B_2=1(\mbox{mod} 4)$.
\item[2.] $A_1=3(\mbox{mod} 4), \, A_2=2(\mbox{mod} 4), \, B_1=0(\mbox{mod} 4), \, and \, B_2=3(\mbox{mod} 4)$.
\item[3.] $A_1=1(\mbox{mod} 4), \, A_2=0(\mbox{mod} 4), \, B_1=2(\mbox{mod} 4), \, and \, B_2=1(\mbox{mod} 4)$.
\item[4.] $A_1=3(\mbox{mod} 4), \, A_2=0(\mbox{mod} 4), \, B_1=2(\mbox{mod} 4), \, and \, B_2=3(\mbox{mod} 4)$.
\item[5.] One of the previous cases with $f$ and $g$ interchanged.
\end{itemize}
\end{theorem}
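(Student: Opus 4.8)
The plan is to transfer the problem from the sphere $1+2Z_2$ to $Z_2$ and then invoke the classical transitivity criterion for $1$-Lipschitz maps. First I would use the affine bijection $\varphi:1+2Z_2\to Z_2$, $\varphi(x)=(x-1)/2$, which carries the normalized Haar measure on $1+2Z_2$ to the Haar measure on $Z_2$. Hence $R=f/g$ is ergodic on $1+2Z_2$ if and only if the conjugate map
$$\widetilde R:=\varphi\circ R\circ\varphi^{-1},\qquad \widetilde R(y)=\frac{f(1+2y)-g(1+2y)}{2\,g(1+2y)},$$
is ergodic on $Z_2$. Since $R$ maps $1+2Z_2$ into itself and $f(1+2Z_2)\subset Z_2^\times$, the value $g(x)$ must be a $2$-adic unit for every $x\in 1+2Z_2$ (equivalently $B_1+B_2$ is odd), so $g(1+2y)$ is a unit for all $y\in Z_2$; and $f(1)\equiv g(1)\pmod 2$ makes the numerator lie in $2Z_2[y]$. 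Thus $\widetilde R$ is a ratio of polynomials in $Z_2[y]$ with unit denominator, in particular analytic on $Z_2$ with derivative of $2$-adic norm $\le 1$, hence $1$-Lipschitz.

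Next I would invoke the standard ergodicity criterion: a $1$-Lipschitz transformation of $Z_2$ is ergodic with respect to Haar measure if and only if it is measure preserving and transitive modulo $8$, i.e. induces a single $8$-cycle on $Z/8Z$ (transitivity modulo $8$ already forces transitivity, hence bijectivity, modulo every $2^k$). On the sphere side this says that $R$ permutes the eight residue classes of $1+2Z_2$ modulo $16$ in a single cycle. So the whole question reduces to computing $\widetilde R\bmod 8$ from the coefficients of $f$ and $g$ and deciding when the induced permutation of $Z/8Z$ is an $8$-cycle.

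For the computation, for $x=1+2u$ the binomial theorem gives $x^i\equiv 1+2iu+4{i\choose 2}u^2+8{i\choose 3}u^3\pmod{16}$, so with $F_k=\sum_i{i\choose k}a_i$ and $G_k=\sum_i{i\choose k}b_i$ one gets $f(1+2u)\equiv F_0+2uF_1+4u^2F_2+8u^3F_3\pmod{16}$ and similarly for $g$. One then expands $\widetilde R(y)=(f(1+2y)-g(1+2y))\cdot(2g(1+2y))^{-1}$, inverting the unit $g(1+2y)$ by a geometric series and keeping everything modulo $8$. The heart of the proof is a reduction lemma: after the cancellations, the residue of $\widetilde R$ modulo $8$ -- and hence the cycle type of the induced permutation of $Z/8Z$ -- depends only on $A_1,A_2,B_1,B_2$ modulo $4$. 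Here one uses that $F_0=A_1+A_2$ and $G_0=B_1+B_2$ are units, that $u^2\equiv 1\pmod 8$ whenever $u$ is odd, and congruences modulo small powers of $2$ for $\sum_i ia_i$, $\sum_i{i\choose 2}a_i,\dots$ obtained by grouping the coefficients according to the parity (and residue modulo $4$) of the index. Once the dependence is reduced to $A_1,A_2,B_1,B_2\bmod 4$, one finishes using the elementary fact that an affine map $y\mapsto\alpha y+\beta$ of $Z/8Z$ is an $8$-cycle exactly when $\alpha\equiv 1\pmod 4$ and $\beta$ is odd, together with an analysis of the nonlinear correction terms; running through the finitely many residue patterns with $A_1$ odd and $B_2$ odd produces the four systems (1)--(4), and the remaining parity pattern ($A_1$ even, $B_2$ even) follows from the same computation after exchanging $f$ and $g$ (equivalently, after conjugating $R$ by the measure preserving isometry $x\mapsto x^{-1}$ of $1+2Z_2$), which is item (5).

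I expect the main obstacle to be exactly this reduction lemma: checking that every quantity which a priori requires higher $2$-adic precision of the coefficients -- $F_0$ and $G_0$ modulo $8$, the modulo-$4$ ambiguities of $F_1,G_1$, the values of $F_2,G_2$ -- drops out of the transitivity condition, so that only $A_1,A_2,B_1,B_2$ modulo $4$ survive. This is a finite but delicate bookkeeping, and it is precisely what makes the clean form of the criterion possible; the subsequent enumeration of the $8$-cycles is then routine case checking.
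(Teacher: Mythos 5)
The paper does not prove this statement at all: it is quoted as a known criterion from Memi\'{c}'s article \cite{M}, so there is no in-paper proof to compare with and your proposal has to be judged on its own merits. Your general plan (move from $1+2Z_2$ to $Z_2$ by $y=(x-1)/2$, reduce ergodicity to transitivity on a finite quotient, then decide from the coefficients when the induced permutation is a full cycle) is the standard route for results of this kind, and several ingredients are sound: $\widetilde R$ is indeed $1$-Lipschitz with everywhere-unit denominator, the fact about affine maps of $Z/8Z$ being $8$-cycles iff $\alpha\equiv 1\pmod 4$ and $\beta$ odd is correct, and case 5 can indeed be obtained by conjugating with the isometry $x\mapsto x^{-1}$ (reversing $f$ and $g$ with an even reversal degree swaps the roles of the $A$- and $B$-sums).

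However, the two steps that carry the real content are missing. First, the claim that a $1$-Lipschitz measure-preserving map of $Z_2$ is ergodic iff it is transitive modulo $8$, ``since transitivity modulo $8$ already forces transitivity modulo every $2^k$,'' is false for general $1$-Lipschitz maps: for instance $y\mapsto y+1+8\cdot\mathbf{1}_{8Z_2}(y)$ is $1$-Lipschitz, measure preserving and transitive mod $8$, but splits into two $8$-cycles mod $16$. The implication ``transitive mod $8$ $\Rightarrow$ ergodic'' is a theorem for polynomials (Larin, Anashin) and for suitable uniformly differentiable classes, and proving that it applies to $\widetilde R(y)=\bigl(f(1+2y)-g(1+2y)\bigr)/\bigl(2g(1+2y)\bigr)$ --- and at which level: the Taylor error of a rational map a priori only guarantees the needed uniform differentiability from some level on, so the criterion could in principle sit at mod $16$ or mod $32$ --- is precisely the nontrivial half of Memi\'{c}'s theorem; it cannot simply be invoked. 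Second, the ``reduction lemma,'' that the induced permutation of $Z/8Z$ depends only on $A_1,A_2,B_1,B_2$ modulo $4$, is asserted rather than proved, and you yourself flag it as the main obstacle; note that quantities such as $F_1=\sum_i i\,a_i\bmod 4$ and $F_2=\sum_i{i\choose 2}a_i$, $F_0\bmod 8$, which enter your mod-$16$ expansion, are \emph{not} functions of $A_1,A_2\bmod 4$, so the claimed cancellation must actually be exhibited before the final enumeration of $8$-cycles can be carried out. As it stands the proposal is a plausible programme whose two key lemmas (the finite-level ergodicity criterion for this class of rational maps, and the reduction of the mod-$8$ data to $A_1,A_2,B_1,B_2$ modulo $4$) are left unproven, so it does not yet constitute a proof.
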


But, in this paper we will study ergodicity of the dynamical system $(S_r(x_0), f, \mu)$ for any $r\in I$.
For this purpose we can not use Theorem \ref{crit} directly, because sphere's radius is arbitrary and its center is not at $0$.

That's why we will do the following.

Let $r=p^l$ and a function $f: S_{p^l}(x_0)\rightarrow S_{p^l}(x_0)$ is given.
Denote $f\circ g=f(g(t))$.

Consider $x=g(t)=p^{-l}t+x_0, \, t=g^{-1}(x)=p^l(x-x_0)$ then
it is easy to see that $f\circ g: S_1(0)\rightarrow
S_{p^l}(x_0)$.
Consequently, $g^{-1}\circ f\circ g: S_1(0)\rightarrow
S_1(0)$.

Let $\mathcal B$ (resp. $\mathcal B_1$) be the algebra generated by closed
subsets of $S_{p^l}(x_0)$ (resp. $S_1(0)$), and $\mu$ (resp. $\mu_1$)
be normalized Haar measure on $\mathcal B$ (resp. $\mathcal B_1$).

\begin{theorem}{\cite{RS2}}{\label{erg1}} The dynamical system
$(S_{p^l}(x_0), \, f, \, \mu)$ is ergodic if and only if
$(S_1(0), \, g^{-1}\circ f\circ g, \, \mu_1)$ is ergodic.
\end{theorem}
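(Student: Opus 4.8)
The plan is to prove Theorem~\ref{erg1} by constructing an explicit measure-space isomorphism between $(S_{p^l}(x_0),\mathcal B,\mu)$ and $(S_1(0),\mathcal B_1,\mu_1)$ that conjugates $f$ to $g^{-1}\circ f\circ g$, and then invoking the standard fact that ergodicity is preserved under such isomorphisms. The natural candidate for the isomorphism is $g^{-1}:S_{p^l}(x_0)\to S_1(0)$, $g^{-1}(x)=p^l(x-x_0)$, whose inverse is $g(t)=p^{-l}t+x_0$. First I would check that $g$ and $g^{-1}$ are mutually inverse bijections between the two spheres: if $|x-x_0|_p=p^l$ then $|p^l(x-x_0)|_p=p^{-l}\cdot p^l=1$, so $g^{-1}$ maps $S_{p^l}(x_0)$ into $S_1(0)$, and similarly $g$ maps back; this is essentially already observed in the text preceding the statement.

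Next I would verify that $g^{-1}$ is bi-measurable and scales the normalized Haar measure trivially, i.e. $\mu_1(g^{-1}(E))=\mu(E)$ for every $E\in\mathcal B$. The key computation is on balls: a ball $U_\rho(s)\subset S_{p^l}(x_0)$ is carried by $g^{-1}$ to $U_{p^l\rho}(g^{-1}(s))\subset S_1(0)$, because $|p^l(x-x_0)-p^l(s-x_0)|_p=p^{-l}|x-s|_p\le p^{-l}\rho$ iff $|x-s|_p\le\rho$. Then
$$\mu_1\bigl(g^{-1}(U_\rho(s))\bigr)=\mu_1\bigl(U_{p^l\rho}(g^{-1}(s))\bigr)=\frac{p\,(p^l\rho)}{(p-1)\cdot 1}=\frac{p\rho}{(p-1)p^l}=\frac{p\rho}{(p-1)r}=\mu(U_\rho(s)).$$
Since $\mathcal B$ is generated by such balls and both measures are (normalized Haar) measures determined by their values on balls, the identity extends to all of $\mathcal B$; in particular $g^{-1}$ sends sets of measure $0$ (resp.\ $1$) to sets of measure $0$ (resp.\ $1$).

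Then I would record the conjugacy identity: by the very definition of $h:=g^{-1}\circ f\circ g$ on $S_1(0)$ we have $h\circ g^{-1}=g^{-1}\circ f$ as maps on $S_{p^l}(x_0)$, i.e.\ $g^{-1}$ intertwines $f$ and $h$. Finally, for the ergodicity transfer: if $(S_1(0),h,\mu_1)$ is ergodic and $V\subset S_{p^l}(x_0)$ is $f$-invariant with $V\in\mathcal B$, then $g^{-1}(V)$ is $h$-invariant and lies in $\mathcal B_1$, so $\mu_1(g^{-1}(V))\in\{0,1\}$, whence $\mu(V)\in\{0,1\}$; thus $(S_{p^l}(x_0),f,\mu)$ is ergodic. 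The reverse implication is identical using $g$ in place of $g^{-1}$. I do not expect a serious obstacle here: the only point requiring a little care is confirming that $h$ genuinely maps $S_1(0)$ into itself (so that the statement is meaningful) and that $g^{-1}$ is a measure-theoretic isomorphism rather than merely a measurable bijection — both follow from the non-Archimedean scaling behaviour of balls shown above. The mild subtlety is that $\mathcal B$ is only an algebra, not a $\sigma$-algebra, but since $f$ (hence $h$) preserves the class of finite unions of balls and both measures are finitely additive set functions agreeing on balls, the argument goes through at the level of the algebra with no need for a Carathéodory extension.
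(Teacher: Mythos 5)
Your proposal is correct and follows what is essentially the only (and the intended) argument for this statement, which the paper itself quotes from \cite{RS2} without proof: $g^{-1}$ is a bijection of the spheres carrying balls to balls, matching the normalized Haar measures, and intertwining $f$ with $g^{-1}\circ f\circ g$, so invariant sets and their measures correspond and ergodicity transfers both ways. The only slip is notational: $g^{-1}$ contracts radii, sending $U_{\rho}(s)$ to $U_{p^{-l}\rho}\bigl(g^{-1}(s)\bigr)$ rather than $U_{p^{l}\rho}\bigl(g^{-1}(s)\bigr)$ (as your own inequality $|p^l(x-s)|_p\le p^{-l}\rho$ shows), and with this correction your chain of equalities $\mu_1\bigl(g^{-1}(U_\rho(s))\bigr)=\frac{p\,p^{-l}\rho}{p-1}=\frac{p\rho}{(p-1)r}=\mu\bigl(U_\rho(s)\bigr)$ is exactly right.
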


\begin{remark} We note that Theorem \ref{crit} gives a criterion of
ergodicity for rational functions defined on the sphere with fixed
radius (=1). Our Theorem \ref{erg1} allows us to use Theorem
\ref{crit} for the spheres with an arbitrary radius.
\end{remark}

Now using the above mentioned results for
$f(x)={{x^2+ax+b}\over{x+c}}$, when $p=2$ and $f: S_r(x_0)\rightarrow
S_r(x_0)$ we prove the following theorem.

\begin{theorem}{\label{erg2}}
If $p=2$, then the dynamical system $(S_r(x_0), f, \mu)$ is ergodic iff
$\alpha\neq\beta$ and $r=2\max\{\alpha,\beta\}$.
\end{theorem}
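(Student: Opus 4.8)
The plan is to reduce the problem to the ergodicity criterion of Theorem \ref{crit} via the conjugation of Theorem \ref{erg1}. First I would fix $r \in I$ and, using Theorem \ref{erg1}, replace the system $(S_r(x_0), f, \mu)$ by the conjugated system $(S_1(0), F, \mu_1)$ where $F = g^{-1}\circ f \circ g$ with $g(t) = p^{-l}t + x_0$ and $r = p^l = 2^l$. An explicit computation gives $F$ as a rational function of $t$: substituting $x = 2^{-l}t + x_0$ into $f(x) = \frac{x^2+ax+b}{x+c}$, multiplying through, and applying $g^{-1}$ yields $F(t) = \frac{P(t)}{Q(t)}$ with numerator and denominator polynomials in $t$ whose coefficients are explicit expressions in $a,b,c,x_0$ and powers of $2$. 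The key quantities controlling $2$-adic integrality and the residues mod $4$ will be the $2$-adic sizes $\alpha = |x_0+a|_2$, $\beta = |x_0+c|_2$, and $r = 2^l$; the denominator of $F$ is essentially $t + 2^l(x_0+c)$, so the condition that $F$ maps $S_1(0)\to S_1(0)$ with $2$-adic integer coefficients forces constraints relating $l$ to $\alpha$ and $\beta$, consistent with $S_r(x_0)$ being invariant (i.e. $r\in I$, which by Lemma \ref{ab2} means $r>\max\{\alpha,\beta\}$ when $\alpha\ne\beta$, or $r\ne\beta$ when $\alpha=\beta$).

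Next I would split into the cases $\alpha = \beta$ and $\alpha \ne \beta$. When $\alpha = \beta$, Lemma \ref{ab2} gives $\rho(r) = \frac{|a-c|_2 r}{\beta}$ for $r<\beta$ and $\rho(r) = |a-c|_2$ for $r>\beta$; since $|a-c|_2 = |(x_0+a)-(x_0+c)|_2 \le \alpha = \beta$, in every case the invariant ball $U_{\rho(r)}(s)$ from Theorem \ref{ca}(2) has normalized measure $\mu(U_{\rho(r)}(s)) = \frac{2\rho(r)}{r}$, and one checks this is never equal to $1$ — hence non-ergodicity, ruling out the $\alpha=\beta$ branch entirely. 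When $\alpha \ne \beta$ and $r > 2\max\{\alpha,\beta\}$, the same measure computation gives $\rho(r) = \max\{\alpha,\beta\}$ and $\mu(U_{\rho(r)}(s)) = \frac{2\max\{\alpha,\beta\}}{r} \le \frac12 < 1$, so again the dynamical system is not ergodic. This isolates the only possible ergodic radius as $r = 2\max\{\alpha,\beta\}$ with $\alpha\ne\beta$, and it remains to verify that ergodicity actually does hold there.

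For the affirmative part, at $r = 2\max\{\alpha,\beta\}$ I would compute the coefficients $A_1, A_2, B_1, B_2$ of the conjugated rational function $F = P/Q$ on $S_1(0)$ explicitly (reducing mod $4$), treating the two subcases $\alpha > \beta$ and $\alpha < \beta$ separately since the dominant term in the denominator $t + 2^l(x_0+c)$ differs. In each subcase the $2$-adic valuations arrange so that $P$ and $Q$ (after clearing the common power of $2$) have integer coefficients and the sums $A_1,A_2,B_1,B_2$ fall into one of the five admissible residue patterns of Theorem \ref{crit}; this verification, a finite case check mod $4$, is the technical heart of the argument. The main obstacle I anticipate is precisely this bookkeeping: correctly normalizing $F$ so that $P,Q\in Z_2[t]$, tracking which power of $2$ divides numerator and denominator, and computing $A_i,B_i \bmod 4$ in terms of the (a priori only $2$-adically constrained) parameters $a,b,c$ — in particular, showing that the sub-leading $2$-adic digits of $x_0+a$ and $x_0+c$ conspire to land in Theorem \ref{crit}'s list regardless of those finer digits, or else identifying any further hypotheses this forces. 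Once the residues are pinned down, Theorem \ref{crit} together with Theorem \ref{erg1} closes the proof.
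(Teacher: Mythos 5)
Your reduction-plus-criterion plan is the right skeleton (it is the paper's), but the way you dispose of the case $\alpha=\beta$ is wrong. You claim that when $\alpha=\beta$ the invariant ball $U_{\rho(r)}(s)$ of Theorem \ref{ca} always has normalized measure $\frac{2\rho(r)}{r}\neq 1$, hence non-ergodicity. In $Q_2$, two elements of equal norm differ by an element of at most half that norm, so $\alpha=\beta$ forces $|a-c|_2\le \beta/2$, and the extreme value $|a-c|_2=\beta/2$ does occur: take for instance $f(x)=\frac{x^2+3x}{x+1}$, where $x_0=0$, $\alpha=\beta=1$, $|a-c|_2=\frac12$. For such $f$ and any $r<\beta$, Lemma \ref{ab2} gives $\rho(r)=\frac{|a-c|_2\,r}{\beta}=\frac r2$, so $\mu\bigl(U_{\rho(r)}(s)\bigr)=1$; worse, for $p=2$ the sphere $S_r(x_0)$ is itself a single ball of radius $\frac r2$ (any two of its points are at distance at most $\frac r2$), so $U_{\rho(r)}(s)=S_r(x_0)$ and Theorem \ref{ca} yields no invariant set of intermediate measure. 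Thus the measure argument only works when $\rho(r)\le \frac r4$ and cannot eliminate the branch $\alpha=\beta$; the paper eliminates it with the same conjugation-plus-criterion machinery you reserve for the positive direction: after the change of variable one gets (\ref{k}) when $l>m$ and (\ref{kk}) when $l<m$, and when $\alpha=\beta$ the two non-unit coefficient sums ($2^l(x_0+a)$ and $2^l(x_0+c)$ in (\ref{k}); in (\ref{kk}) they are literally equal) have the same $2$-adic valuation, which is incompatible with every residue pattern of Theorem \ref{crit}, since each pattern requires one of them to be $\equiv 2$ and the other $\equiv 0 \pmod 4$.

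The affirmative half at $r=2\max\{\alpha,\beta\}$, which is the actual content of the theorem, is only announced, not carried out: you defer the computation of $A_1,A_2,B_1,B_2$ modulo $4$ and even leave open whether extra hypotheses on the lower $2$-adic digits of $x_0+a$, $x_0+c$ might be needed. That worry evaporates once you centre at the fixed point before substituting: since $f(x)-x_0=\frac{(x-x_0)^2+(x_0+a)(x-x_0)}{(x-x_0)+(x_0+c)}$, the conjugated map is exactly $\frac{t^2+2^l(x_0+a)t}{t+2^l(x_0+c)}$, with no constant term in the numerator; the four sums are then $1$, $2^l(x_0+a)$, $2^l(x_0+c)$, $1$, their residues mod $4$ depend only on the valuation gaps $l-q$ and $l-m$, and Theorem \ref{crit} holds precisely when one gap equals $1$ and the other is at least $2$, i.e.\ $\alpha\neq\beta$ and $r=2\max\{\alpha,\beta\}$. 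Your measure-based non-ergodicity for $\alpha\neq\beta$ and $r>2\max\{\alpha,\beta\}$ is correct (there $\rho(r)=\max\{\alpha,\beta\}\le \frac r4$) and is a legitimate shortcut the paper does not take, but as written the proposal neither completes the ergodic case nor correctly rules out $\alpha=\beta$.
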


\begin{proof} Let $r=2^l$, $\alpha=2^q$ and $\beta=2^m$. Since $\alpha=|x_0+a|_2$ and $\beta=|x_0+c|_2$, then we have $x_0+a\in
2^{-q}(1+2 Z_2)$ and $x_0+c\in 2^{-m}(1+2 Z_2)$.

In $f:S_{2^l}(x_0)\rightarrow S_{2^l}(x_0)$ we change $x$ by
$x=g(t)=2^{-l}t+x_0$. We note that $x\in S_{2^l}(x_0)$, then
$|x-x_0|_2=2^l|t|_2=2^l$, $|t|_2=1$ and the function
$g^{-1}(f(g(t))):S_1(0)\rightarrow S_1(0)$ has the following form
\begin{equation}{\label{k}}
g^{-1}(f(g(t)))={{t^2+2^l(x_0+a)t}\over{t+2^l(x_0+c)}}.
\end{equation}

For the numerator of (\ref{k}) we have $$|t^2|_2=|t|_2=1, \ \
|2^l(x_0+a)t|_2=2^{q-l} \ \ \mbox{and} \ \
|2^l(x_0+c)|_2=2^{m-l}.$$

If $r\in I_1$, then $l>q$ and $l>m$. Consequently,
$$t^2+2^l(x_0+a)t=:\gamma_1(t), \ \ \mbox{is such that} \ \ \gamma_1: 1+2 Z_2\rightarrow 1+2 Z_2$$
and
$$t+2^l(x_0+c)=:\gamma_2(t)    \ \ \mbox{is such that} \ \  \gamma_2: 1+2 Z_2\rightarrow 1+2 Z_2.$$
Let $r\in I_2$, i.e. $q=m$. Then $l>m$ or $l<m$. If $l>m$, then
$$\gamma_1, \, \gamma_2: 1+2 Z_2\rightarrow 1+2 Z_2.$$
If $l<m$, then we can write (\ref{k}) the following form
\begin{equation}{\label{kk}}
g^{-1}(f(g(t)))={{{{t^2}\over{2^l(x_0+a)}}+t}\over{{t\over{2^l(x_0+a)}}+{{x_0+c}\over{x_0+a}}}}.
\end{equation}
For the numerator of (\ref{kk}) we have
$$|t|_2=\left|{{x_0+c}\over{x_0+a}}\right|_2=1, \ \ \mbox{and} \ \
\left|{{t^2}\over{2^l(x_0+a)}}\right|_2=\left|{t\over{2^l(x_0+a)}}\right|_2=2^{l-m}\leq{1\over
2}.$$ Consequently,
$${{{t^2}\over{2^l(x_0+a)}}+t}=:\delta_1(t), \ \ \mbox{is such that} \ \ \delta_1: 1+2 Z_2\rightarrow 1+2 Z_2$$
and
$${{t\over{2^l(x_0+a)}}+{{x_0+c}\over{x_0+a}}}=:\delta_2(t)    \ \ \mbox{is such that} \ \  \delta_2: 1+2 Z_2\rightarrow 1+2 Z_2.$$

Hence the function (\ref{k}) satisfies all conditions of Theorem
\ref{crit}, therefore using this theorem we have $$A_1=1, \ \
A_2=2^l(x_0+c), \ \ B_1=2^l(x_0+a) \ \ \mbox{and} \ \ B_2=1.$$

Moreover,
$$A_1=1({\rm mod}\,4), \ \ A_2\in 2^{l-m}(1+2 Z_2), \ \
B_1\in 2^{l-q}(1+2 Z_2) \ \ \mbox{and} \ \ B_2=1({\rm
mod}\,4).$$

By this relations and Theorem \ref{crit} we get
$$A_2=0({\rm mod}\,4)\ \ \mbox{and} \ \ B_1=2({\rm mod}\,4), \ \
\mbox{i.e.} \ \ l-m\geq 2 \ \ \mbox{and} \ \ l-q=1$$ or
$$A_2=2({\rm mod}\,4)\ \ \mbox{and} \ \ B_1=0({\rm mod}\,4), \ \
\mbox{i.e.} \ \ l-m=1 \ \ \mbox{and} \ \ l-q\geq2.$$

Therefore we conclude that the dynamical system $(S_1(0), \,
g^{-1}\circ f\circ g, \, \mu_1)$ is ergodic iff $q>m$ and $l=q+1$
or $q<m$ and $l=m+1$, i.e. $\alpha>\beta$ and $r=2\alpha$ or
$\alpha<\beta$ and $r=2\beta$. Consequently, by Theorem
\ref{erg1}, $(S_r(x_0), f, \mu)$ is ergodic iff $\alpha\neq\beta$
and $r=2\max\{\alpha,\beta\}$. Theorem is proved.
\end{proof}

\section*{Acknowledgments}

The author expresses his deep gratitude to Professor U. A. Rozikov for setting up the problem and for the useful suggestions.

{}

\begin{thebibliography}{999}

\bibitem{ARS} S. Albeverio, U.A. Rozikov, I.A. Sattarov. $p$-adic $(2,1)$-rational dynamical systems. {\it Jour. Math. Anal. Appl.} {\bf 398}(2) (2013), 553--566.

\bibitem{AKTS} S.Albeverio, A.Khrennikov, B.Tirozzi and S.De.Smedt, {\it $p$-adic
dynamical systems}, Theor.Math. Phys. {\bf 114}(1998), 276-287.

\bibitem{GKL} V.M.Gundlach, A.Khrennikov and K.O.Lindahl, {\it On ergodic
behavior of $p$-adic dynamical systems.}, Infin. Dimen. Anal.
Quantum Probab. Relat. Top. {\bf 4}(2001), 569-577.

\bibitem{M} N. Memi\'{c},  Characterization of ergodic rational functions on the set $2$-adic units. {\it
Inter. J. Number Theory.} {\bf 13} (2017), 1119-–1128.

\bibitem{MR} F.M. Mukhamedov, U.A. Rozikov, {\it On rational $p$-adic dynamical systems}.
Methods of Func. Anal. and Topology. 2004, V.10, No.2, p. 21-31.

\bibitem{PJS} H.-O.Peitgen, H.Jungers and D.Saupe, {\it Chaos Fractals}, Springer,
Heidelberg-New York, 1992.

\bibitem{RS} U.A. Rozikov, I.A. Sattarov. On a non-linear $p$-adic dynamical system.
{\it $p$-Adic Numbers, Ultrametric Analysis and Applications}, {\bf 6}(1) (2014), 53--64.

\bibitem{RS2} U.A. Rozikov, I.A. Sattarov. $p$-adic dynamical systems of $(2,2)$-rational functions with unique fixed
point. {\it Chaos, Solitons and Fractals}, {\bf 105} (2017), 260--270.

\bibitem{S} I.A. Sattarov. $p$-adic $(3,2)$-rational dynamical
systems. {\it $p$-Adic Numbers, Ultrametric Analysis and
Applications}, {\bf 7}(1) (2015), 39--55.

\bibitem{Wal} P.Walters, {\it An introduction to ergodic theory}, Springer, Berlin-Heidelberg-New York,
1982.\\

\end{thebibliography}
\end{document}